\newcommand{\R}{{\mathbb R}}
\newcommand{\cB}{{\mathcal B}}
\newcommand{\cU}{{\mathcal U}}
\bmdefine{\Bzero}{0}
\bmdefine{\Bone}{1}
\bmdefine{\Ba}{a}
\bmdefine{\Bb}{b}
\bmdefine{\Bc}{c}
\bmdefine{\Bd}{d}
\bmdefine{\Be}{e}
\bmdefine{\Bf}{f}
\bmdefine{\Bg}{g}
\bmdefine{\Bh}{h}
\bmdefine{\Bi}{i}
\bmdefine{\Bj}{j}
\bmdefine{\Bk}{k}
\bmdefine{\Bl}{l}
\bmdefine{\Bell}{\ell}
\bmdefine{\Bm}{m}
\bmdefine{\Bn}{n}
\bmdefine{\Bp}{p}
\bmdefine{\Bq}{q}
\bmdefine{\Br}{r}
\bmdefine{\Bs}{s}
\bmdefine{\Bt}{t}
\bmdefine{\Bu}{u}
\bmdefine{\Bv}{v}
\bmdefine{\Bw}{w}
\bmdefine{\Bx}{x}
\bmdefine{\By}{y}
\bmdefine{\Bz}{z}
\bmdefine{\BA}{A}
\bmdefine{\BB}{B}
\bmdefine{\BC}{C}
\bmdefine{\BD}{D}
\bmdefine{\BF}{F}
\bmdefine{\BG}{G}
\bmdefine{\BH}{H}
\bmdefine{\BI}{I}
\bmdefine{\BM}{M}
\bmdefine{\BP}{P}
\bmdefine{\BS}{S}
\bmdefine{\BT}{T}
\bmdefine{\BR}{R}
\bmdefine{\BV}{V}
\bmdefine{\BW}{W}
\bmdefine{\BX}{X}
\bmdefine{\BY}{Y}
\bmdefine{\BZ}{Z}
\bmdefine{\Balp}{\alpha}
\bmdefine{\Bbet}{\beta}
\bmdefine{\Bdelta}{\delta}
\bmdefine{\BDelta}{\Delta}
\bmdefine{\Beps}{\epsilon}
\bmdefine{\Beta}{\eta}
\bmdefine{\Bmu}{\mu}
\bmdefine{\Bnu}{\nu}
\bmdefine{\Bpi}{\pi}
\bmdefine{\Bpsi}{\psi}
\bmdefine{\BPsi}{\Psi}
\bmdefine{\Btau}{\tau}
\bmdefine{\Btheta}{\theta}
\bmdefine{\BTheta}{\Theta}
\bmdefine{\Bpartial}{\partial}
\bmdefine{\Bsigma}{\sigma}
\bmdefine{\Bveps}{\varepsilon}
\bmdefine{\Bxi}{\xi}
\newcommand{\argmax}{\operatorname{argmax}}
\newcommand{\Bin}{\mathop{\operatorname{Bin}}}
\theoremstyle{plain}
\newtheorem{theorem}{Theorem}
\newtheorem{lemma}[theorem]{Lemma}
\newtheorem{proposition}[theorem]{Proposition}
\theoremstyle{definition}
\theoremstyle{remark}
\begin{document}

\title{Existence of Firth's modified estimates in binomial regression models}
\author{
Mitsunori Ogawa\thanks{Interfaculty Initiative in Information Studies, The University of Tokyo} \
and Yui Tomo \thanks{Department of Biostatistics, The University of Tokyo} \thanks{Department of Clinical Data Science, National Center of Neurology and Psychiatry} \thanks{Department of Health Policy and Management, Keio University}\
}
\date{}
\maketitle

\begin{abstract}
In logistic regression modeling, Firth's modified estimator is widely used to address the issue of data separation, which results in the nonexistence of the maximum likelihood estimate.
Firth's modified estimator can be formulated as a penalized maximum likelihood estimator in which Jeffreys' prior is adopted as the penalty term.
Despite its widespread use in practice, the formal verification of the corresponding estimate's existence has not been established.
In this study, we establish the existence theorem of Firth's modified estimate in binomial logistic regression models, assuming only the full column rankness of the design matrix.
We also discuss other binomial regression models obtained through alternating link functions and prove the existence of similar penalized maximum likelihood estimates for such models.
\end{abstract}

\section{Introduction}
\label{sec:intro}

The logistic regression model is one of the most fundamental models in generalized linear models, and maximum likelihood estimation is a standard method for parameter estimation.
However, it is widely known that the maximum likelihood estimate in a logistic regression model does not exist when data separation occurs (\cite{Albert1984-mk}).
Roughly speaking, this happens when a separation hyperplane exists that separates the data points according to their categories.
Firth's method is often used to address this issue (\cite{Firth1993-li, Heinze2002-je}).
In practice, it has been applied in various fields, including social sciences (\cite{Bandyopadhyay2013-pf, Bhavnani2009-et}) and medical sciences (\cite{Chatsirisupachai2021-td, Bambauer2006-da, Teoh2020-yc}).

Firth's method was originally proposed as a bias correction method for maximum likelihood estimators in \cite{Firth1993-li}.
In this method, the asymptotically bias-corrected estimator is defined as the solution to the modified score equation.
It can also be formulated as a penalized maximum likelihood estimator, in which Jeffreys' prior is adopted as the penalty term.
In both formulations, Firth's method outputs a bias-corrected estimate directly, without computing the original maximum likelihood estimate.
This is a distinctive feature not found in typical bias correction methods, where the maximum likelihood estimate is corrected using a bias correction term.
This feature allows us to expect Firth's modified estimate to exist even when the maximum likelihood estimate does not exist.
Some theoretical properties of the Jeffreys-prior penalty term have been investigated; for example, see \cite{Chen2008-cx, Kosmidis2009-mb, Kosmidis2021-bk}.

Firth's method has been applied to various regression models to obtain an estimator for which the corresponding estimate is ensured to exist even when the original one does not.
\cite{Heinze2002-je} and \cite{Heinze2001-uq} applied Firth's method to the binomial logistic regression model and the Cox proportional hazards model, respectively.
\cite{Bull2002-ed} considered the multinomial logistic regression model and proposed a similar solution to the separation problem.
\cite{Joshi2022-dc} discussed the Poisson regression case and explored some modified estimation methods.
Additionally, \cite{Alam2022-ny} investigated the application of Firth's method to accelerated failure time models.

Despite the widespread use of Firth's method in methodological studies of regression models and their applications in practice, the validity of the existence of Firth's modified estimate has not been fully verified.
\cite{Firth1993-li} yielded an intuitive argument on the existence of Firth's estimate in the binomial logistic regression model, which claimed that the Jeffreys-prior penalty term is unbounded below as the parameter diverges without any formal proof.
\cite{Heinze2002-je} demonstrated the validity of existence through extensive numerical experiments.
Recently, \cite{Kosmidis2021-bk} discussed the theoretical properties of Firth's estimate in binomial regression models.
They examined the finiteness through the examination of the divergence of the Jeffreys-prior penalty term.
However, their discussion implicitly assumed the existence in a vague sense and was insufficient as formal proof.
Overall, although the existence of Firth's modified estimate is empirically evident, more formal theoretical justification is still required.

To fill this gap, we establish the existence theorem of Firth's modified estimate in the binomial logistic regression model, assuming only the full column rankness of the design matrix.
Our proof is consistent with the intuitive argument in \cite{Firth1993-li}.
We also discuss some binomial regression models other than the logistic regression model obtained through alternating link functions and derive similar existence results of the corresponding penalized maximum likelihood estimates for such models.

\section{Existence guarantee in the logistic regression case}
\label{sec:binomial}

Assume that we have $n$ realizations $y_1,\dots,y_n$ that are generated independently from $\Bin(m_i, \pi_i(\beta))$ ($i=1,\dots,n$).
Here, $m_i$ is a given positive integer and $\pi_i(\beta)$ is a probability determined by the following logistic model:
\begin{equation}
	\label{eq:logistic}
	\pi_i(\beta)
    = \frac{\exp(x_i^{\top}\beta)}{1 + \exp(x_i^{\top}\beta)},
\end{equation}
where $x_i=(x_{i1},\dots,x_{ip})^{\top}\in\R^p$ is the covariate vector of subject $i$ and $\beta=(\beta_1,\dots,\beta_p)^{\top}\in\R^p$ is a parameter vector.
Throughout this study, we assume that $n\ge p$.
The log-likelihood function is
\begin{align*}
    l(\beta)
    = \sum_{i=1}^n \left[y_i x_i^{\top}\beta - m_i\log\left\{1+\exp(x_i^{\top}\beta)\right\}\right]
\end{align*}
up to an additive constant irrelevant to $\beta$.
The Hessian matrix of $l(\beta)$ is $-X^{\top}MW(\beta)X$, where $X=(x_1,\dots,x_n)^{\top}\in\R^{n\times p}$, $M=\operatorname{diag}(m_1,\dots,m_n)$, and $W(\beta)=\operatorname{diag}\{w_1(\beta),\dots,w_n(\beta)\}$ with $w_i(\beta)=\pi_i(\beta)\{1-\pi_i(\beta)\}=\exp(x_i^{\top}\beta)/\{1+\exp(x_i^{\top}\beta)\}^2$.
Firth's penalized log-likelihood function is
\begin{align}
    \label{eq:pllk_b}
    l^*(\beta)
    := l(\beta) + \frac{1}{2}\log |X^{\top}MW(\beta)X|.
\end{align}
We define the corresponding estimator as the maximizer of the penalized log-likelihood, that is, a solution $\hat{\beta}\in\R^p$ satisfying
\begin{align}
	\label{eq:f_mle}
    l^*(\hat{\beta})
    = \sup_{\beta\in\R^p} l^*(\beta).
\end{align}
We refer to this estimator as Firth's modified estimator.

The main result of this study is the following theorem, which confirms the existence of Firth's modified estimates in the binomial logistic regression model.

\begin{theorem}
\label{the:existence_b}
If $X$ is of full column rank, there exists a maximizer $\hat{\beta}\in\R^p$ of $l^*$ and the set of maximizers $\argmax_{\beta\in\R^p} l^*(\beta)$ of $l^*$ is bounded.
\end{theorem}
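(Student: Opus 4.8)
The plan is to prove that $l^*$ is \emph{coercive}, i.e.\ that $l^*(\beta)\to-\infty$ as $\|\beta\|\to\infty$. Since $w_i(\beta)>0$ for every finite $\beta$ and $X$ has full column rank, the matrix $X^{\top}MW(\beta)X$ is positive definite for all $\beta\in\R^p$, so $l^*$ is finite and continuous on $\R^p$. Granting coercivity, fix any $\beta_0$; the superlevel set $\{\beta: l^*(\beta)\ge l^*(\beta_0)\}$ is then closed and bounded, hence compact, so $l^*$ attains a global maximum there. Moreover $\argmax_{\beta\in\R^p} l^*(\beta)$ is contained in this superlevel set and is therefore bounded. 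Thus both assertions of the theorem reduce to coercivity.

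To establish coercivity I would treat the two terms of $l^*$ separately. First, the unpenalized log-likelihood $l$ is bounded above on $\R^p$ by a constant: writing $l(\beta)=\sum_i f_i(x_i^{\top}\beta)$ with $f_i(s)=y_i s-m_i\log(1+e^s)$, each $f_i$ is concave in $s$ and, because $0\le y_i\le m_i$, is bounded above (its maximum is attained at $s=\log\{y_i/(m_i-y_i)\}$ when $0<y_i<m_i$, and $f_i\le 0$ in the boundary cases $y_i\in\{0,m_i\}$). Summing gives $l(\beta)\le C$ for a finite constant $C$. It therefore suffices to show that the penalty term $\tfrac12\log\det\{X^{\top}MW(\beta)X\}$ tends to $-\infty$ as $\|\beta\|\to\infty$.

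For the penalty I would expand the determinant by the Cauchy--Binet formula,
\begin{align*}
  \det\{X^{\top}MW(\beta)X\}
  = \sum_{|S|=p}\det(X_S)^2\prod_{i\in S} m_i\, w_i(\beta),
\end{align*}
where $S$ ranges over the $p$-subsets of $\{1,\dots,n\}$ and $X_S$ is the corresponding $p\times p$ row submatrix; every summand is nonnegative. Given a sequence $\beta_k$ with $\|\beta_k\|\to\infty$, pass to a subsequence along which $u_k:=\beta_k/\|\beta_k\|\to u$ for some unit vector $u$. The decisive observation is that any index set $S$ contributing a nonzero term must contain at least one $i$ with $x_i^{\top}u\neq 0$: the rows $\{x_i: x_i^{\top}u=0\}$ all lie in the hyperplane $u^{\perp}$ of dimension $p-1$, so no $p$ of them can be linearly independent, forcing $\det(X_S)=0$ whenever $x_i^{\top}u=0$ for all $i\in S$. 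This is precisely where full column rank enters. For such a surviving index, $x_i^{\top}\beta_k=\|\beta_k\|(x_i^{\top}u_k)$ satisfies $|x_i^{\top}\beta_k|\to\infty$, and the elementary bound $w_i(\beta)=e^{x_i^{\top}\beta}/(1+e^{x_i^{\top}\beta})^2\le e^{-|x_i^{\top}\beta|}$ forces $w_i(\beta_k)\to 0$; since the remaining weights are bounded by $1/4$, each term of the finite sum tends to $0$. Hence $\det\{X^{\top}MW(\beta_k)X\}\to 0$, the penalty diverges to $-\infty$, and with $l\le C$ this gives $l^*(\beta_k)\to-\infty$. As an arbitrary sequence with $\|\beta_k\|\to\infty$ admits such a convergent-direction subsequence and the conclusion propagates to the full sequence by the usual subsequence principle, $l^*$ is coercive.

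The main obstacle is the treatment of directions $u$ for which many covariates satisfy $x_i^{\top}u=0$: along such rays several weights need not decay, so one cannot argue term-by-term that \emph{all} weights vanish. The resolution is the linear-algebra fact above---rows orthogonal to $u$ span only a $(p-1)$-dimensional space and so cannot form a nonsingular minor---which guarantees that at least one genuinely decaying weight multiplies every nonvanishing Cauchy--Binet term. I would also note that normalizing to a convergent unit direction is needed precisely because the decay rates $|x_i^{\top}\beta_k|$ are not uniform across $i$; compactness of the unit sphere supplies the convergent subsequence that makes the rate argument go through.
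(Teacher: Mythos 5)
Your proof is correct, and it shares the paper's two central ingredients: the Cauchy--Binet expansion of $\det\{X^{\top}MW(\beta)X\}$ into nonnegative terms indexed by $p$-subsets, and the observation that full column rank forces every nonvanishing minor to contain at least one row not orthogonal to the limiting direction, hence at least one exponentially decaying weight. Where you diverge is in how compactness of the unit sphere is used. The paper proves a \emph{uniform} quantitative bound: for each nonzero minor it bounds $\sup_{\|u\|=1}\prod_k w_{i_k}(ru)$ by $\exp(-cr)$ with $c=\min_{\|u\|=1}\sum_i|\cos\theta_i(u)|>0$ obtained from the extreme value theorem, so that $\sup_{\|u\|=1}\det\{X^{\top}MW(ru)X\}\to 0$ with an explicit exponential rate, and then runs the existence argument on a fixed closed ball. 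You instead argue sequentially: extract a convergent direction $u_k\to u$ by Bolzano--Weierstrass, show each Cauchy--Binet term vanishes along the subsequence, and propagate to the full sequence by the subsequence principle, concluding coercivity and then existence via compactness of a superlevel set. Your route is softer (no rate, no uniformity over directions) but entirely sufficient for the theorem; the paper's lemma buys a uniform exponential decay that is of some independent interest. One point in your favor: you make explicit that $l$ itself is bounded above on $\R^p$ (via concavity and boundedness of each $f_i$), a step the paper's proof of the theorem uses implicitly when passing from the decay of the determinant to $\sup_{\|u\|=1}l^*(ru)<l^*(0)$ but does not spell out.
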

\begin{proof}[Proof of Theorem \ref{the:existence_b}]
Let $c:=l^*(0)\in\R$.
From Lemma \ref{lem:bound_b}, we can find a constant $r_0>0$ such that $\sup_{u\in\R^p:\|u\|=1}l^*(ru)<c$ for all $r>r_0$.
Let $\cB(r_0):=\{\beta\in\R^p: \|\beta\|\le r_0\}$ be a closed ball with center $0$ and radius $r_0$.
The restriction $l^*:\cB(r_0)\to\R$ of $l^*(\beta)$ has a maximizer $\beta^*\in\cB(r_0)$, because $l^*(\beta)$ is continuous and $\cB(r_0)$ is compact.
Since $l^*(\beta)$ is lower than $c$ outside $\cB(r_0)$, $\beta^*$ is a maximizer of $l^*:\R^p\to\R$.
We see that the set $\argmax_{\beta\in\R^p} l^*(\beta)$ is bounded because any maximizer must be included in $\cB(r_0)$.
\end{proof}

\begin{lemma}
\label{lem:bound_b}
If $X$ has a full column rank, $\sup_{u\in\R^p : \|u\|=1}|X^{\top}MW(ru)X|\to0$ as $r\to\infty$.
\end{lemma}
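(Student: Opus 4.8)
The plan is to expand the determinant by the Cauchy--Binet formula and then control each resulting term separately. Writing $X_S$ for the $p\times p$ submatrix of $X$ consisting of the rows indexed by a $p$-element subset $S\subseteq\{1,\dots,n\}$, and factoring $X^{\top}MW(ru)X = \bigl(X^{\top}\sqrt{MW(ru)}\bigr)\bigl(\sqrt{MW(ru)}X\bigr)$, the Cauchy--Binet formula gives
\begin{align*}
  |X^{\top}MW(ru)X|
  = \sum_{|S|=p} (\det X_S)^2 \prod_{i\in S} m_i\, w_i(ru),
\end{align*}
a sum of finitely many nonnegative terms. It therefore suffices to show that each term tends to $0$ uniformly over $\|u\|=1$, since a finite sum of uniformly convergent nonnegative terms converges uniformly.

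Next I would record two elementary bounds on the weights: $w_i(ru)\le 1/4$ for every $i$ and $u$, and $w_i(ru)\le \exp(-r|x_i^{\top}u|)$, the latter following from the symmetric inequality $e^{t}/(1+e^{t})^2\le e^{-|t|}$. For a subset $S$ with $\det X_S=0$ the corresponding term vanishes identically, so only subsets with $\det X_S\ne 0$ contribute.

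The crux is to produce, for each contributing $S$, a single factor in the product that decays exponentially in $r$ uniformly in $u$. Since $\det X_S\ne 0$, the rows $\{x_i : i\in S\}$ are linearly independent and span $\R^p$; hence $u\mapsto \max_{i\in S}|x_i^{\top}u|$ is a norm on $\R^p$, in particular continuous and strictly positive on the compact unit sphere, so it attains a positive minimum $\delta_S>0$ there. Thus for every $u$ with $\|u\|=1$ there is some $i\in S$ with $|x_i^{\top}u|\ge\delta_S$; bounding that factor by $\exp(-r\delta_S)$ and the remaining $p-1$ factors by $1/4$ yields
\begin{align*}
  \prod_{i\in S} m_i\, w_i(ru)
  \le \Bigl(\prod_{i\in S} m_i\Bigr)\Bigl(\tfrac14\Bigr)^{p-1}\exp(-r\delta_S)
\end{align*}
uniformly over $\|u\|=1$.

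Finally, summing these bounds over the finitely many subsets $S$ shows that $\sup_{\|u\|=1}|X^{\top}MW(ru)X|$ is dominated by a finite sum of terms each decaying like $\exp(-r\delta_S)\to 0$, which proves the claim. The main obstacle is obtaining \emph{uniformity} over the sphere rather than mere pointwise convergence; this is resolved by the compactness of $\{u:\|u\|=1\}$ together with the reduction to finitely many subsets via Cauchy--Binet. Note that any subset with $\det X_S\ne 0$ automatically consists of linearly independent rows, which is exactly what makes the corresponding maximum a norm; the full-column-rank hypothesis guarantees that at least one such subset exists, so that the argument is not vacuous.
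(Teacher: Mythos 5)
Your proof is correct and takes essentially the same route as the paper's: a Cauchy--Binet expansion of $|X^{\top}MW(ru)X|$ into squared $p\times p$ minors times products of weights, followed by a compactness argument on the unit sphere showing that each term with a nonzero minor decays exponentially in $r$, uniformly in $u$. The only cosmetic difference is that you isolate a single exponentially small factor via the norm $u\mapsto\max_{i\in S}|x_i^{\top}u|$ and bound the rest by $1/4$, whereas the paper bounds the whole product using $\inf_{u}\sum_{i\in S}|\cos\theta_i(u)|>0$; both hinge on the same linear-independence and compactness facts.
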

\begin{proof}
Let $\cU:=\{u\in\R^p: \|u\|=1\}$ be the unit sphere.
For any $r>0$ and $u\in\cU$, we observe that
\begin{align}
	\label{eq:simple_ineq}
    w_i(ru)
    = \frac{1}{\{1+ \exp(rx_i^{\top}u)\}\{1+ \exp(-rx_i^{\top}u)\}}
    \le \frac{1}{1+ \exp(r|x_i^{\top}u|)}
    = \frac{1}{1+ \exp(r\|x_i\| \cdot |\cos \theta_i(u)|)},
\end{align}
where $\theta_i(u)$ denotes the angle between $x_i$ and $u$.

First, we consider the case $n=p$.
Let $w(r):=\sup_{u\in\cU}\prod_{i=1}^n w_i(ru)$ for $r>0$.
We then have
\begin{align*}
    w(r)
    &\le \sup_{u\in\cU} \prod_{i=1}^n \{1+ \exp(r\|x_i\| \cdot |\cos \theta_i(u)|)\}^{-1}\\
    &\le \sup_{u\in\cU} \prod_{i=1}^n \{1+ \exp(r a|\cos \theta_i(u)|)\}^{-1}\\
    &\le \sup_{u\in\cU} \prod_{i=1}^n \exp\{-ra |\cos \theta_i(u)|\}\\
    &= \exp\left[-ra \inf_{u\in\cU} \left\{\sum_{i=1}^n |\cos \theta_i(u)|\right\}\right],
\end{align*}
where $a:=\min\{\|x_i\|:i=1,\dots,n\}$.
Because $\cU$ is compact and $\sum_i|\cos\theta_i(u)|$ is continuous in $u$, a minimum $c:=\min_{u\in\cU} \sum_i|\cos\theta_i(u)|\ge0$ exists.
Since $X$ is of full column rank, we have $c>0$.
Therefore, it follows that $w(r) \le \exp(-acr) \to 0$ as $r\to\infty$.
Since $\sup_{u\in\cU}|X^{\top}MW(ru)X|=|X|^2|M|\sup_{u\in\cU}|W(ru)|=|X|^2|M|w(r)\ge0$, we obtain $\sup_{u\in\cU}|X^{\top}MW(ru)X|\to0$ as $r\to\infty$.

Subsequently, we consider the case $n>p$.
Using the Binet--Cauchy formula, $|X^{\top}MW(\beta)X|$ can be written as 
\begin{align*}
    |X^{\top}MW(\beta)X|
    &= \sum_{1\le i_1<i_2<\dots<i_p\le n} |X^{\top}|_{i_1,i_2,\dots,i_p}^{1,2,\dots,p} \cdot |MWX|^{i_1,i_2,\dots,i_p}_{1,2,\dots,p}\\
    &= \sum_{1\le i_1<i_2<\dots<i_p\le n} \left(|X|^{i_1,i_2,\dots,i_p}_{1,2,\dots,p}\right)^2 \prod_{k=1}^p m_{i_k} w_{i_k}(\beta)
    \ge 0,
\end{align*}
where $|A|_{j_1,j_2,\dots,j_p}^{1,2,\dots,p}$ is the minor determinant of a matrix $A\in\R^{n\times p}$ obtained by taking its $j_1,\dots,j_p$-th rows and $|B|^{j_1,j_2,\dots,j_p}_{1,2,\dots,p}$ is the minor determinant of a matrix $B\in\R^{p\times n}$ obtained by taking its $j_1,\dots,j_p$-th columns.
Using this expression, we can write as
\begin{align*}
    \sup_{u\in\cU} |X^{\top}MW(r u)X|
    &= \sup_{u\in\cU} \sum_{1\le i_1<i_2<\dots<i_p\le n} \left(|X|^{i_1,i_2,\dots,i_p}_{1,2,\dots,p}\right)^2 \prod_{k=1}^p m_{i_k} w_{i_k}(r u)\\
    &= \sum_{1\le i_1<i_2<\dots<i_p\le n} \left(|X|^{i_1,i_2,\dots,i_p}_{1,2,\dots,p}\right)^2 \left(\prod_{k=1}^p m_{i_k}\right) \sup_{u\in\cU} \prod_{k=1}^p w_{i_k}(ru).
\end{align*}
Using the discussion in the previous paragraph, for each term with $|X|^{i_1,i_2,\dots,i_p}_{1,2,\dots,p}\neq0$ on the right-hand side, we find a constant $c_{i_1,\dots,i_p}>0$ such that $\sup_u \prod_{k=1}^p w_{i_k}(r u) \le \exp(-c_{i_1,\dots,i_p}r)$.
Thus, letting $c^* := \min\{c_{i_1,\dots,i_p} : 1\le i_1<i_2<\dots<i_p\le n, ~ |X|^{i_1,i_2,\dots,i_p}_{1,2,\dots,p}\neq0\}$, it follows that
\begin{align*}
    \sup_{u\in\cU} |X^{\top}W(t u)X|
    \le \left\{\sum_{1\le i_1<i_2<\dots<i_p\le n} \left(|X|^{i_1,i_2,\dots,i_p}_{1,2,\dots,p}\right)^2 \prod_{k=1}^p m_{i_k}\right\}\exp(- c^*r)
    \to 0
    \quad \textnormal{as $r\to\infty$}.
\end{align*}
This completes the proof.
\end{proof}

To our knowledge, Theorem \ref{the:existence_b} is the first result that formally ensures the existence of Firth's modified estimate in logistic regression models.
As mentioned in Section \ref{sec:intro}, our proof of existence is consistent with the intuitive discussion in \cite{Firth1993-li} that claims the divergence of the penalty term to $-\infty$ when some entries of $\beta$ diverge.
\cite{Kosmidis2021-bk} also discussed the finiteness of the estimate in their Corollary 1, based on a weaker version of Lemma \ref{lem:bound_b}.
However, the uniformity of the bounds for the Jeffreys-prior penalty term was missing from their discussion.
Lemma \ref{lem:bound_b} fills this gap and allows us to formally establish an existence guarantee.

\section{Binomial regression models specified with other link functions}
\label{sec:other_binomial}

The logistic regression model \eqref{eq:logistic} is the binomial regression model specified by the logit link function in the framework of generalized linear models.
The probit and complementary log-log link functions are other common options of the link functions in binomial regression models.
For the corresponding models, we can consider the Jeffreys-prior penalized maximum likelihood estimators of the form \eqref{eq:f_mle}, where $l(\beta)$ and $W(\beta)$ in \eqref{eq:pllk_b} are replaced appropriately.
Specifically, the diagonal elements of $W(\beta)$ are
\begin{align*}
	w_i(\beta)
	= \begin{cases}
		\{\phi(z)\}^2/[\Phi(z)\{1-\Phi(z)\}] &\quad \textnormal{probit regression}\\
		\exp(-2z)/\{\exp(\exp(-z))-1\} &\quad \textnormal{complementary log-log regression},
	\end{cases}
\end{align*}
where $\phi(\cdot)$ and $\Phi(\cdot)$ are the cumulative distribution and probability density functions of the standard normal distribution, respectively.
Note that these penalized maximum likelihood estimators are different from the estimators obtained by Firth's method because the link functions are not canonical.
The following proposition yields the existence guarantee of the corresponding estimates.

\begin{proposition}
	\label{prop:probit-cloglog}
	For the probit and complementary log-log link functions the claims of Theorem \ref{the:existence_b} hold, that is, if $X$ is of full column rank, there exists a maximizer $\hat{\beta}\in\R^p$ of the corresponding $l^*$ and the set of maximizers $\argmax_{\beta\in\R^p} l^*(\beta)$ of $l^*$ is bounded.
\end{proposition}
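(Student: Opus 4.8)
The plan is to observe that the proof of Theorem \ref{the:existence_b} is purely structural: it combines only two ingredients, namely (a) the log-likelihood $l$ is bounded above on $\R^p$, and (b) the penalty determinant decays, $\sup_{\|u\|=1}|X^{\top}MW(ru)X|\to 0$ as $r\to\infty$ (the content of Lemma \ref{lem:bound_b}). Indeed, (a) and (b) together force $l^*(ru)=l(ru)+\tfrac12\log|X^{\top}MW(ru)X|\to-\infty$ uniformly in $u$ on the unit sphere, which is precisely what the coercivity argument in the proof of Theorem \ref{the:existence_b} requires. Hence it suffices to re-establish (a) and (b) for the probit and complementary log-log weights; the remainder of the proof then applies verbatim.

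For (a), I would use that in every binomial regression model the log-likelihood has the form $l(\beta)=\sum_i[y_i\log\pi_i(\beta)+(m_i-y_i)\log\{1-\pi_i(\beta)\}]$ up to an additive constant, and since $\pi_i(\beta)\in(0,1)$ for every finite $\beta$ under both the probit and the complementary log-log link, each summand is nonpositive. Thus $l$ is bounded above, giving (a) immediately. For (b), I would reuse the Binet--Cauchy expansion from Lemma \ref{lem:bound_b} (which covers all $n\ge p$ at once), reducing the claim to showing that $\sup_{\|u\|=1}\prod_{k=1}^p w_{i_k}(ru)\to 0$ for every index set $\{i_1,\dots,i_p\}$ with $|X|^{i_1,\dots,i_p}_{1,\dots,p}\neq 0$. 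For such a set the rows $x_{i_1},\dots,x_{i_p}$ span $\R^p$, so no nonzero $u$ is orthogonal to all of them, and by compactness of the sphere $\delta:=\inf_{\|u\|=1}\max_k|x_{i_k}^{\top}u|>0$.

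The only facts I then need about a scalar weight $w$ are that it is bounded on $\R$ and that $w(z)\to 0$ as $|z|\to\infty$. Granting these, set $W_{\max}:=\sup_z w(z)$ and $\bar w(t):=\sup_{|z|\ge t}w(z)$, a nonincreasing function tending to $0$. For each $u$ choose the index $k^{\dagger}$ attaining $\max_k|x_{i_k}^{\top}u|\ge\delta$; then $\prod_k w_{i_k}(ru)\le W_{\max}^{\,p-1}\,\bar w(r\delta)\to 0$ uniformly in $u$, and summing the finitely many terms yields (b). Note that this $\bar w(t)$ device uses no symmetry of the weight, so it accommodates an asymmetric $w$ automatically.

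The last piece is to verify the two scalar facts for the specific weights, and this is where the real work concentrates. The logistic case was clean because $w_i$ was controlled by the symmetric bound \eqref{eq:simple_ineq}, and neither of the new weights enjoys this. For the probit weight $w=\phi(z)^2/[\Phi(z)\{1-\Phi(z)\}]$ I would invoke the Mills-ratio asymptotics $1-\Phi(z)\sim\phi(z)/z$ to obtain $w\sim z\phi(z)\to 0$ as $z\to+\infty$, use evenness in $z$ for the opposite tail, and appeal to continuity for boundedness. For the complementary log-log weight the tails are genuinely asymmetric: as $z\to+\infty$ one has $w\sim e^{-z}$, whereas as $z\to-\infty$ the double-exponential denominator drives $w\to 0$ even faster, with continuity again supplying the bound. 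I expect this tail analysis---in particular handling the asymmetry of the complementary log-log weight without a closed-form symmetric bound---to be the main obstacle, but the $\bar w(t)$ reduction above is arranged precisely so that asymmetry costs nothing once boundedness and vanishing tails are established.
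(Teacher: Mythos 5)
Your proposal is correct, and while it follows the same overall reduction as the paper (re-run the coercivity argument of Theorem \ref{the:existence_b}, handle the determinant via the Binet--Cauchy expansion, and reduce everything to scalar properties of the weight function), the mechanism you use for the key uniform-decay step is genuinely different. The paper proves (Lemmas \ref{lem:bound_probit} and \ref{lem:cloglog}) that $(1+e^{|z|})\,w(z)$ is bounded, i.e.\ it dominates each new weight by a constant multiple of the logistic envelope $1/(1+e^{|z|})$, so that the inequality \eqref{eq:simple_ineq} and the entire chain of estimates in Lemma \ref{lem:bound_b} apply verbatim, exponential rate included. You instead require only that $w$ be bounded and vanish as $|z|\to\infty$, and obtain uniformity through the ``one small factor'' bound $\prod_k w_{i_k}(ru)\le W_{\max}^{\,p-1}\,\bar w(r\delta)$ with $\delta=\inf_{\|u\|=1}\max_k|x_{i_k}^{\top}u|>0$; this is weaker as a hypothesis and makes the treatment of the asymmetric complementary log-log tails essentially free, whereas the paper must verify at least exponential decay of both tails to fit the symmetric envelope. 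The trade-offs are what you would expect: your argument is more general and more modular (it would cover any link whose weight is bounded and tends to zero, with no rate assumption), but it yields no explicit decay rate for $\sup_u|X^{\top}MW(ru)X|$, while the paper's route preserves the $\exp(-c^*r)$ rate and minimizes new work by funneling everything through the already-proved logistic lemma. Your tail computations (Mills-ratio asymptotics for probit, $w\sim e^{-z}$ and double-exponential decay for complementary log-log) match what the paper's appendix lemmas establish, just packaged as limits rather than as a uniform closed-form bound.
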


\begin{proof}
	By using Lemmas \ref{lem:bound_probit} and \ref{lem:cloglog}, for the probit and complementary log-log regression models, we can find a positive constant $c$ such that
	\begin{equation*}
		w_i(ru)
		\le \frac{c}{1+\exp(r|x_i^{\top}u|)}
	\end{equation*}
	for any $r>0$ and $u\in\cU$.
	Using this inequation instead of \eqref{eq:simple_ineq}, the claim of Proposition \ref{prop:probit-cloglog} can be proved according to the proof of Theorem \ref{the:existence_b}.
\end{proof}

\section*{Acknowledgement}

This work was supported by JSPS KAKENHI Grant Number JP20K19752.

\appendix
\renewcommand{\thesection}{\appendixname\ \Alph{section}}
\renewcommand{\appendixname}{Appendix}
\renewcommand{\thetheorem}{\Alph{section}.\arabic{theorem}}
\setcounter{theorem}{0}

\section{Lemmas used in the proof of Proposition \ref{prop:probit-cloglog}}
\label{app:lemma}

The following lemmas are used in the proof of Proposition \ref{prop:probit-cloglog}.

\begin{lemma}
	\label{lem:bound_probit}
	There exists a constant $c\in\R$ such that 
	\begin{equation*}
		f(z):=
		(1+e^{|z|}) \frac{\{\phi(z)\}^2}{\Phi(z)\{1-\Phi(z)\}} < c, \quad z\in\R.
	\end{equation*}
\end{lemma}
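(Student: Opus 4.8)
The plan is to prove boundedness by showing that $f$ is continuous on all of $\R$ and vanishes at infinity; a continuous function on $\R$ with $f(z)\to 0$ as $z\to\pm\infty$ attains a finite supremum, and then any $c$ strictly larger than that supremum works. First I would note that $f$ is even: since $\phi(-z)=\phi(z)$ and $\Phi(-z)=1-\Phi(z)$, both $\phi(z)^2$ and $\Phi(z)\{1-\Phi(z)\}$ are invariant under $z\mapsto -z$, and so is $1+e^{|z|}$. Hence it suffices to treat $z\ge 0$. On $[0,\infty)$ the denominator $\Phi(z)\{1-\Phi(z)\}$ is strictly positive, so $f$ is continuous there and therefore bounded on every compact interval; the only real issue is the behavior as $z\to\infty$.

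For the tail I would use a classical Mills-ratio lower bound for the normal tail, namely $1-\Phi(z)\ge \frac{z}{z^2+1}\,\phi(z)$ for $z>0$. Combined with $\Phi(z)\ge\Phi(0)=\tfrac12$ for $z\ge 0$, this gives, for $z>0$,
\[
  \frac{\phi(z)^2}{\Phi(z)\{1-\Phi(z)\}}
  \le \frac{\phi(z)^2}{\tfrac12\cdot\tfrac{z}{z^2+1}\phi(z)}
  = \frac{2(z^2+1)}{z}\,\phi(z).
\]
Multiplying by $1+e^{z}\le 2e^{z}$ (valid for $z\ge 0$) and inserting $\phi(z)=(2\pi)^{-1/2}e^{-z^2/2}$ yields
\[
  f(z)\le \frac{4(z^2+1)}{z\sqrt{2\pi}}\,e^{\,z-z^2/2}
  \qquad (z>0),
\]
whose right-hand side tends to $0$ as $z\to\infty$ because the exponent $z-z^2/2\to-\infty$ dominates the polynomial prefactor. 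In particular $f(z)\to 0$ as $z\to\infty$, and by evenness also as $z\to-\infty$.

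Putting the pieces together — evenness, continuity (hence boundedness on $[0,1]$), and the vanishing tail on $[1,\infty)$ — shows that $\sup_{z\in\R}f(z)$ is finite, which gives the claim. I expect the only genuine step to be the Mills-ratio estimate; everything else is bookkeeping. That bound is standard, but if a self-contained argument is wanted I would verify it via the monotonicity of $h(z):=1-\Phi(z)-\tfrac{z}{z^2+1}\phi(z)$: a short computation gives $h'(z)=-\tfrac{2}{(z^2+1)^2}\phi(z)<0$, and since $h(z)\to 0$ as $z\to\infty$, the function $h$ stays positive for all finite $z>0$, which is exactly the desired inequality.
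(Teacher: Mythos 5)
Your proof is correct and follows essentially the same route as the paper's: evenness, the Mills-ratio lower bound $1-\Phi(z)\ge \frac{z}{z^2+1}\phi(z)$ together with $\Phi(z)\ge\tfrac12$ to show $f(z)\to 0$ as $z\to\infty$, and continuity on compacts to finish. The only addition is your self-contained verification of the Mills-ratio inequality (which the paper invokes without proof), and that computation checks out.
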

\begin{proof}
	Since the function on the left-hand side is even, we assume $z>0$ without loss of generality.
	Because $\Phi(z)>1/2$ and $1-\Phi(z)>\frac{z}{z^2+1}\phi(z)$, we have
	\begin{equation*}
		f(z)
		< (1+e^{|z|}) \phi(z) \frac{2(z^2+1)}{z} \to 0.
	\end{equation*}
	Then, for any $\varepsilon>0$ we can take $\delta>0$ such that $f(z)<\varepsilon$ for $z>\delta$.
	Therefore we have $f(z)\le\max\{ \max_{s\in[0,\delta]}f(s), \varepsilon\}<\infty$.
\end{proof}

\begin{lemma}
	\label{lem:cloglog}
	There exists a constant $c\in\R$ such that 
	\begin{equation*}
		f(z):=
		(1+e^{|z|}) \frac{\exp(-2z)}{\exp(\exp(-z))-1} < c, \quad z\in\R.
	\end{equation*}
\end{lemma}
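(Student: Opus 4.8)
The plan is to follow the structure of the proof of Lemma~\ref{lem:bound_probit}: first observe that $f$ is continuous on all of $\R$, then bound $f$ separately on the two half-lines $z\ge0$ and $z\le0$. Unlike the probit case, $f$ here is not even, so the two tails must be handled independently; they also turn out to exhibit different limiting behaviour ($f(z)\to1$ as $z\to+\infty$ but $f(z)\to0$ as $z\to-\infty$), and it is precisely the double-exponential denominator that produces the decay in the left tail.

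First I would note that $e^{-z}>0$ for every $z\in\R$, so $\exp(\exp(-z))>1$ and the denominator $\exp(\exp(-z))-1$ is strictly positive; hence $f$ is continuous on $\R$. For the right half-line, I would apply the elementary inequality $e^x-1\ge x$ with $x=e^{-z}$ to obtain $\exp(\exp(-z))-1\ge e^{-z}$, whence $\exp(-2z)/\{\exp(\exp(-z))-1\}\le e^{-z}$. Since $|z|=z$ for $z\ge0$, this yields
\begin{equation*}
    f(z)\le(1+e^{z})e^{-z}=1+e^{-z}\le2,\qquad z\ge0,
\end{equation*}
so $f$ is bounded by $2$ on $[0,\infty)$.

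For the left half-line I would substitute $s:=e^{-z}$, which ranges over $[1,\infty)$ as $z$ ranges over $(-\infty,0]$, and note $|z|=-z$. This rewrites $f$ as
\begin{equation*}
    f(z)=(1+s)\,\frac{s^2}{e^{s}-1},
\end{equation*}
a function that is continuous on $[1,\infty)$ (since $e^{s}-1>0$ there) and tends to $0$ as $s\to\infty$, the numerator being polynomial while the denominator grows exponentially. A continuous function on $[1,\infty)$ with a finite limit at infinity is bounded, so $f$ is bounded on $(-\infty,0]$ as well; taking the maximum of the two bounds produces the desired constant $c$. The only real subtlety is recognizing that the double exponential $\exp(\exp(-z))$ is what forces decay in the left tail---without it the left tail would diverge---whereas the right tail is controlled by the simple inequality $e^{x}-1\ge x$; once these two observations are in place the estimates are routine.
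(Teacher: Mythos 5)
Your proof is correct. It differs from the paper's in how the two tails are handled. The paper uses a single comparison function: it lower-bounds the denominator via $e^x > 1 + x + x^2/2 + x^3/6$ with $x = e^{-z}$, obtaining $f(z) < g(z) := (1+e^{|z|})e^{-z}/(1+e^{-z}/2+e^{-2z}/6)$ for all $z$, and then computes the limits of $g$ at $\pm\infty$ (namely $1$ and $6$); the cubic term $x^3/6$ is exactly what is needed to keep $g$ finite as $z\to-\infty$. You instead split the line at $0$ and use different estimates on each piece: on $[0,\infty)$ the crude bound $e^x - 1 \ge x$ already gives $f \le 2$, and on $(-\infty,0]$ the substitution $s = e^{-z}$ exposes the genuine double-exponential decay, $f = (1+s)s^2/(e^s-1) \to 0$. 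Your left-tail argument is sharper (it shows $f\to 0$ rather than merely bounding it by something tending to $6$) and avoids having to guess the right order of Taylor expansion, at the cost of a case split; the paper's argument is more uniform but relies on choosing the cubic truncation. Both are complete; incidentally, your version sidesteps the paper's typo ``for any $\varepsilon<0$'' (which should read $\varepsilon>0$).
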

\begin{proof}
	Since $e^z > 1+z+z^2/2+z^3/6$ for $z>0$, we have
	\begin{equation*}
		(1+e^{|z|}) \frac{\exp(-2z)}{\exp(\exp(-z))-1}
		< (1+e^{|z|}) \frac{e^{-2z}}{e^{-z} + e^{-2z}/2 + e^{-3z}/6}
		= (1+e^{|z|}) \frac{e^{-z}}{1 + e^{-z}/2 + e^{-2z}/6}
		=: g(z). 
	\end{equation*}
	For $z>0$, 
	\begin{equation*}
		g(z)
		= \frac{1+e^{-z}}{1 + e^{-z}/2 + e^{-2z}/6}
		\to 1 \quad \textnormal{as $z\to\infty$}.
	\end{equation*}
	For $z<0$,
	\begin{equation*}
		g(z)
		= \frac{1 + e^z}{e^{2z} + e^z/2 + 1/6}
		\to 6 \quad \textnormal{as $z\to-\infty$}.
	\end{equation*}
	Thus, for any $\varepsilon<0$, there exists $\delta>0$ such that $g(z)<6 + \varepsilon$ for any $|z|>\delta$.
	Therefore we have $f(z)<g(z)\le\max\{6 + \varepsilon, \max_{s: |s|\le\delta}g(s)\}<\infty$.
\end{proof}

\bibliographystyle{jasa}

\end{document}